\newcommand{\R}{\mathbb{R}}
\newtheorem{theorem}{Theorem}
\newtheorem*{theorem*}{Theorem}
\newtheorem{lemma}[theorem]{Lemma}
\newtheorem{claim}[theorem]{Claim}
\newtheorem*{obs*}{Observation}
\newtheorem*{cor*}{Corollary}
\newtheorem{prop}[theorem]{Proposition}
\newtheorem*{corollary*}{Corollary}
\newtheorem*{remark*}{Remark}
\newtheorem*{claim*}{Claim}
\newtheorem*{lemma*}{Lemma}
\newtheorem*{prop*}{Proposition}
\newtheorem*{remark}{Remark}
\newcommand{\ip}[2]{\langle #1 , #2 \rangle}
\newcommand{\cU}{\mathcal{U}}
\newcommand{\conv}{\mathsf{conv}}
\newcommand{\eps}{\varepsilon}
\newcommand{\GM}{\mathsf{GM}}
\numberwithin{equation}{section}
\begin{document}

\title{Intuitive norms are Euclidean}



\author{Shay Moran}
\address{Departments of Mathematics, Computer Science, Data and Decision Sciences, Technion-IIT, and Google Research.}
\email{smoran@technion.ac.il}

\author{Alexander Shlimovich}
\address{Department of Mathematics, Technion-IIT}
\email{ashlimovich@campus.technion.ac.il}

\author{Amir Yehudayoff}
\address{Department of Computer Science, The University of Copenhagen,
and Department of Mathematics, Technion-IIT}
\email{amir.yehudayoff@gmail.com}

\thanks{A.Y. is supported by a DNRF chair grant,
and partially supported by BARC}

\begin{abstract}
We call a norm on $\R^n$ intuitive 
if for every points $p_1,\ldots,p_m$ in~$\R^n$,
one of the geometric medians of the points over the norm
is in their convex hull. 
We characterize all intuitive norms. 
\end{abstract}

\maketitle

\section{Introduction}

The starting point of our work is the intuitive statement
that the geometric median of a set of points
is in their convex hull. 
The geometric median of $p_1,\ldots,p_m \in \R^n$
is the minimizer of the convex function
$$\R^n \ni \xi \mapsto \sum_{i \in [m]} \|\xi-p_i\|_2;$$
it is unique when the points are not collinear (see e.g.~\cite{Minsker_2015} and references within).
The geometric median is also known as the Fermat-Weber point,
the Torricelli point, Haldane’s median, and by several other names. 
It is important in statistics because its breakdown point in $50 \%$;
i.e., we need to change at least half of the mass to send the geometric median to infinity~\cite{lopuhaa1991breakdown}.
It is important in facility location because it captures minimizing
the cost of transportation. 
Consequently, 
it has a very rich history 
and there are many related algorithmic results
(see e.g.~\cite{drezner2002weber} and the long list of references within).

The geometric median makes sense over general norms. 
Let $K \subset \R^n$ be a symmetric convex body 
(i.e., compact with non-empty interior
so that $K=-K$), and 
denote by $\|\cdot\|_K$ the norm defined by $K$:
$$\| \xi \|_K = \inf \{ t > 0 : \xi \in t K\}.$$
We also allow weights for the points.
For a finite set of points $P \subset \R^n$, 
denote by $\Delta_P$ the collection of probability distributions on $P$.
The geometric median $\GM_{K}(W)$ of $W$ over $K$
is the set of minimizers of the convex function
$$F_{K,W}(\xi)=  \sum_{p \in P} W(p) \|\xi-p\|_K.$$
The geometric median is always non-empty and convex. 
If $K$ is strictly convex and the points in $P$ are not collinear then
it is a unique point~\cite{Minsker_2015}.

It is intuitively obvious that
the geometric median is in the convex hull $\conv(P)$
because norms are convex.
Let us say that $K$ is {\em intuitive} if
there always exists a geometric median is in the convex hull of the data points;
for every $W \in \Delta_P$,
$$\GM_K(W) \cap \conv(P) \neq \emptyset.$$
Let us start by describing some intuitive norms. 

\begin{obs*}
All centered $\ell_2$ balls are intuitive.
\end{obs*}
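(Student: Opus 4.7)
The plan is to leverage the Pythagorean characterization of Euclidean projection. For any closed convex set $C \subset \R^n$, the nearest-point map $\pi_C$ with respect to the Euclidean norm is a contraction toward points of $C$: for every $x \in \R^n$ and every $y \in C$,
\[ \|\pi_C(x) - y\|_2 \le \|x - y\|_2, \]
with strict inequality whenever $x \notin C$. This follows from the variational characterization $\langle x - \pi_C(x), y - \pi_C(x)\rangle \le 0$ for all $y \in C$, which when combined with expanding $\|x - y\|_2^2 = \|(x-\pi_C(x)) + (\pi_C(x) - y)\|_2^2$ yields
\[ \|x - y\|_2^2 \ge \|x - \pi_C(x)\|_2^2 + \|\pi_C(x) - y\|_2^2. \]

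Given this, the proof is immediate. Fix $W \in \Delta_P$ and let $\xi^* \in \GM_K(W)$. Set $C = \conv(P)$, which is compact and convex, and let $\xi' = \pi_C(\xi^*)$. Since $K$ is a centered $\ell_2$ ball, $\|\cdot\|_K$ is a positive scalar multiple of $\|\cdot\|_2$. Each $p \in P$ lies in $C$, so the contraction property gives $\|\xi' - p\|_K \le \|\xi^* - p\|_K$. Weighting by $W(p)$ and summing over $p \in P$ yields $F_{K,W}(\xi') \le F_{K,W}(\xi^*)$. Since $\xi^*$ is a minimizer, so is $\xi'$, and by construction $\xi' \in \conv(P)$. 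Hence $\xi' \in \GM_K(W) \cap \conv(P)$.

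There is no real obstacle for this direction. In fact, the strict form of the contraction shows that \emph{every} geometric median lies in $\conv(P)$, not merely one of them: were $\xi^* \notin \conv(P)$, we would get $F_{K,W}(\xi') < F_{K,W}(\xi^*)$, contradicting minimality. The substantive content of the paper presumably lies in the converse direction, classifying which other norms share this property and confirming the title's assertion that they are essentially only the Euclidean ones.
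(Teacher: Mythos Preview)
Your proof is correct. The paper's argument is the infinitesimal version of yours: rather than projecting all the way onto $C=\conv(P)$, it takes a separating hyperplane with unit normal $u$ satisfying $\ip{u}{\xi-p}>0$ for every $p\in P$, and checks via the expansion $\|\xi-tu-p\|_2^2=\|\xi-p\|_2^2-2t\ip{u}{\xi-p}+t^2$ that for small $t>0$ the point $\xi-tu$ has strictly smaller objective. Both arguments rest on the same geometry (your direction $\xi^*-\pi_C(\xi^*)$ is exactly such a $u$), but your projection version is global rather than local: it produces a specific minimizer inside $\conv(P)$ in one shot, and the Pythagorean inequality immediately yields the stronger conclusion $\GM_K(W)\subseteq\conv(P)$, whereas the paper deduces this only indirectly from ``no exterior point can be optimal'' together with existence of a minimizer.
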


\begin{proof}
If $\xi$ is outside $\conv(P)$
then there is a separating hyperplane with a normal $\ell_2$ unit vector $u$
so that $\ip{u}{\xi-p}  > 0$ for all $p \in P$.
For every $p \in P$ and $t \in \R$,
$$\|\xi - t u - p\|_2 = \sqrt{\|\xi-p\|_2^2 - 2 t \ip{u}{\xi-p} + t^2 }.$$
Now, if $t>0$ is small enough then
the point $\xi-tu$ 
has a smaller $F_{\ell_2,W}$ value than $\xi$. 
\end{proof}

We can obtain more intuitive norms, because
being intuitive is invariant under linear transformations.

\begin{obs*}If $K \subset \R^n$ is intuitive and $T : \R^n \to \R^n$
is an invertible linear map,
then $TK$ is intuitive. 
\end{obs*}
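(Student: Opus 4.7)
The plan is to reduce the geometric median problem for $TK$ directly to the one for $K$ by a change of variables, using the fundamental compatibility $\|T\xi\|_{TK} = \|\xi\|_K$ between a norm and its image under a linear map. This identity is essentially the definition of $\|\cdot\|_{TK}$ (since $\xi \in t(TK)$ iff $T^{-1}\xi \in tK$), so I will just record it and move on.

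The main step is to show, for an arbitrary finite point set $P' \subset \R^n$ with weights $W' \in \Delta_{P'}$, that $\GM_{TK}(W') = T \cdot \GM_K(W)$, where $P := T^{-1}P'$ and $W$ is the pushforward weight defined by $W(T^{-1}p') = W'(p')$. Concretely, I would substitute $\xi = T\eta$ into $F_{TK,W'}$ and use the norm identity above to rewrite
$$F_{TK,W'}(T\eta) = \sum_{p' \in P'} W'(p') \, \|T\eta - p'\|_{TK} = \sum_{p \in P} W(p)\, \|\eta - p\|_K = F_{K,W}(\eta).$$
Thus $T$ bijectively matches minimizers on the two sides, giving the claimed equality of geometric medians.

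Finally I would invoke the intuitiveness of $K$: pick $\eta \in \GM_K(W) \cap \conv(P)$, and observe that $T\eta \in \GM_{TK}(W')$ and, since linear maps commute with convex hulls, $T\eta \in T\conv(P) = \conv(TP) = \conv(P')$. Hence $\GM_{TK}(W') \cap \conv(P') \neq \emptyset$.

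I do not anticipate a real obstacle here; the only point that requires a tiny bit of care is making sure the change of variables between $W$ and $W'$ is well-defined (which uses invertibility of $T$ to guarantee the pullback points $T^{-1}p'$ are distinct) and that linear maps commute with convex hulls. Both are routine.
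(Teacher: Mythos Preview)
Your argument is correct and is essentially identical to the paper's: both use the identity $\|\xi\|_{TK}=\|T^{-1}\xi\|_K$ to rewrite $F_{TK,W'}(T\eta)=F_{K,W}(\eta)$ for the pulled-back data $P=T^{-1}P'$, then push a minimizer forward by $T$ and use that linear maps preserve convex hulls. The only difference is notational (the roles of $P,P'$ and $W,W'$ are swapped relative to the paper).
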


\begin{proof}
For every $\xi$, 
\begin{align*}
F_{TK,W}(\xi)
& = \sum_{p \in P}  W(p) \|\xi - p\|_{TK} \\
& = \sum_{p \in P}  W(p) \|T^{-1}(\xi - p)\|_{K} \\
& = F_{K,W'}(T^{-1} \xi),
\end{align*}
for $P' = T^{-1} P$ and $W' = W \circ T$. 
Because $K$ is intuitive,
there is a minimizer $x$ of $F_{K,W'}$ in $\conv(P')$.
The point $\xi = T x$ is a minimizer of $F_{TK,W}$ 
and $\xi \in \conv(P)$.
\end{proof}

Because linear maps of balls are ellipsoids, 
we can conclude that all Euclidean norms are intuitive
(a.k.a.\ finite dimensional Hilbert spaces). 

\begin{cor*}
All centered ellipsoids are intuitive.
\end{cor*}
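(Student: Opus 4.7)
The plan is to combine the two observations immediately preceding the corollary. The first observation shows that the centered Euclidean unit ball $B = \{\xi \in \R^n : \|\xi\|_2 \le 1\}$ is intuitive; the second observation shows that the class of intuitive symmetric convex bodies is closed under invertible linear maps of $\R^n$. So the only remaining task is to identify every centered ellipsoid as an invertible linear image of $B$.

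For that identification I would use the standard fact that any centered ellipsoid $E \subset \R^n$ has the form $E = \{\xi : \xi^\top A \xi \le 1\}$ for some symmetric positive-definite matrix $A$. Writing $A^{-1} = T T^\top$ via the symmetric positive-definite square root (or, equivalently, via the spectral theorem), one obtains $E = T(B)$ with $T$ invertible. Applying the second observation with $K = B$ and this choice of $T$ then yields that $E = TK$ is intuitive.

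There is no genuine obstacle here: the corollary is a direct chaining of the two established observations with the well-known characterization of centered ellipsoids as invertible linear images of the Euclidean unit ball. The only minor point worth double-checking is that the definition of ``centered ellipsoid'' in the paper is the one just used; if it is instead taken to be an image of \emph{some} ball, the argument becomes even shorter.
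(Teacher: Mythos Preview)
Your proposal is correct and matches the paper's approach exactly: the corollary is stated immediately after the two observations and is deduced from them via the one-line remark that centered ellipsoids are precisely the invertible linear images of centered $\ell_2$ balls. Your extra detail (writing $E=T(B)$ via the positive-definite square root) is a harmless elaboration of that same step.
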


The first theorem we prove is that in the plane
all norms are intuitive.

\begin{theorem}
\label{thm:2}
Every symmetric convex body $K \subset \R^2$ is intuitive. 
\end{theorem}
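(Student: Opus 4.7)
The plan is to reduce to the case where $K$ is smooth and strictly convex by approximation, and then use a duality argument in the plane.

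For the approximation, I would take a sequence of smooth, strictly convex, symmetric convex bodies $K_n \to K$ in Hausdorff distance (e.g., $K_n$ with support function $h_{K_n} = h_K * \rho_n + 1/n$ for a smooth even mollifier $\rho_n$ on the unit circle). Then $\|\cdot\|_{K_n}$ converges uniformly on compact sets to $\|\cdot\|_K$, so $F_{K_n,W} \to F_{K,W}$ uniformly on compact sets. If each $F_{K_n,W}$ admits a minimizer $\xi_n \in \conv(P)$, then after passing to a subsequence $\xi_n \to \xi^* \in \conv(P)$, and uniform convergence forces $\xi^*$ to minimize $F_{K,W}$. So it suffices to handle $K$ smooth and strictly convex.

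For such $K$, $F = F_{K,W}$ is smooth away from $P$ with gradient $\nabla F(\xi) = \sum_p W(p)\, \phi_{\xi-p}$, where $\phi_y := \nabla\|\cdot\|_K(y) \in \R^2$ is the supporting functional of $K$ at $y/\|y\|_K$ (its direction as a vector equals the outward normal direction of $K$ at that point). Since $\phi_y$ depends only on the direction of $y$, I parametrize unit directions by angles in $S^1$ and let $g(\alpha) \in S^1$ be the angle of $\phi_y$ when $y$ has angle $\alpha$. Two facts about $g$ drive the argument: (i) $g$ is a strictly monotone orientation-preserving homeomorphism of $S^1$, a standard consequence of smoothness and strict convexity (the outward normal rotates strictly and monotonically as we traverse $\partial K$); and (ii) $g(\alpha + \pi) = g(\alpha) + \pi$, immediate from $K = -K$ since this gives $\phi_{-y} = -\phi_y$.

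Now suppose $\xi$ is a minimizer lying outside $\conv(P)$. If $\xi \in P$ we are done, so assume $\xi \neq p$ for every $p$ in the support of $W$. Compactness of $\conv(P)$ provides a strict separating hyperplane: some $u \in \R^2$ with $\langle u, \xi - p\rangle > 0$ for all such $p$. Hence the directions of the vectors $\xi - p$ lie in an open arc $(\alpha_-, \alpha_+) \subset S^1$ with $\alpha_+ - \alpha_- < \pi$. By (i)--(ii), $g(\alpha_+) < g(\alpha_- + \pi) = g(\alpha_-) + \pi$, so the directions of the $\phi_{\xi - p}$ also lie in an arc of length strictly less than $\pi$, and therefore all of them lie in a common open half-plane $\{v : \langle w, v\rangle > 0\}$ for some $w \in \R^2$. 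But then $\langle w, \nabla F(\xi)\rangle > 0$, contradicting $\nabla F(\xi) = 0$. Thus every minimizer must lie in $\conv(P)$.

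The central difficulty is the dual-arc step, and it is here that the symmetry hypothesis $K = -K$ is essential: without (ii) an arc on $\partial K$ of angular span strictly less than $\pi$ can map under $g$ to an arc of angular span $\geq \pi$ on the dual body, and then the positive combination $\sum_p W(p)\, \phi_{\xi - p}$ is not forced to be nonzero.
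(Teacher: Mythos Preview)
Your argument is correct. The approximation step is fine: mollifying the support function with an even kernel and adding $1/n$ produces symmetric bodies with $h_{K_n}''+h_{K_n}>0$, hence smooth and strictly convex, and the limiting argument passes a minimizer in $\conv(P)$ to the limit. In the smooth strictly convex case, your Gauss-map observation that a $\pi$-equivariant orientation-preserving circle homeomorphism sends arcs of length $<\pi$ to arcs of length $<\pi$ cleanly forces $\nabla F(\xi)\neq 0$ whenever $\xi\notin\conv(P)$.

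The paper's proof is different in execution. It works directly with subgradients for an arbitrary $K$ (no approximation): after separating $0\in\GM_K(W)$ from $\conv(P)$ by the $e_1$-axis, it picks a tangent direction $u$ to $K$ at the intersection $x_*$ of $\partial K$ with that axis and shows geometrically that $\ip{g_p}{u}\geq 0$ for every chosen subgradient $g_p$. The first-order condition then forces all $\ip{g_p}{u}=0$, and a further case analysis of the set $\{z\in\partial K:\exists\,g_z\perp u\}$ produces a nearby point $\varepsilon u\in\GM_K(W)$ in the open upper half-plane, contradicting the separation. Conceptually both proofs exploit the same phenomenon---symmetry of $K$ forces normals along a half-arc of $\partial K$ into a half-plane---but your packaging via the Gauss map yields a one-line contradiction at the cost of the approximation, whereas the paper avoids approximation at the cost of the endpoint analysis needed when the inequality $\ip{g_p}{u}\geq 0$ degenerates to equality.
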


What about higher dimensions?
Our main result is that in higher dimensions, 
the only intuitive norms are Euclidean.

\begin{theorem}
\label{thm:geq3}
Let $K \subset \R^n$ be a symmetric convex body for $n \geq 3$.
Then,
\begin{center}
 $K$ is intuitive $\iff$ $K$ is an ellipsoid.
\end{center}
\end{theorem}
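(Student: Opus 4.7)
The ``$\Leftarrow$'' direction is the Corollary above. For ``$\Rightarrow$'', assume $K \subset \R^n$ with $n \geq 3$ is intuitive, and write $\varphi = \|\cdot\|_K$. My plan is to show that the projectivization of the Gauss map $v \mapsto \nabla\varphi(v)$ sends projective lines to projective lines; the Fundamental Theorem of Projective Geometry then forces this map to be induced by a linear automorphism of $\R^n$, and an integration argument identifies $\varphi$ as a Euclidean norm.

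First reduce to the case where $K$ is smooth and strictly convex, so that $\varphi \in C^1(\R^n \setminus 0)$ and geometric medians of non-collinear configurations are unique. Fix a $2$-dimensional subspace $H \subset \R^n$ and a weighted configuration $(P, W)$ with $P \subset H$. Theorem~\ref{thm:2} produces the unique $2$-dimensional geometric median $\xi^* \in \conv(P) \subset H$ for $\|\cdot\|_{K\cap H}$. The intuitive hypothesis places some ambient geometric median of $(P, W)$ inside $\conv(P) \subset H$, and uniqueness forces it to coincide with $\xi^*$. Hence the full ambient gradient $\sum_{p\in P} W(p)\, \nabla\varphi(\xi^* - p)$ vanishes in $\R^n$, not merely in $H$. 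Writing $\pi, \pi^\perp$ for the orthogonal projections onto $H$ and $H^\perp$, setting $g = \pi\circ\nabla\varphi$ and $h = \pi^\perp \circ \nabla\varphi$, and translating so that $\xi^* = 0$, the condition reads: whenever $v_i \in H\setminus 0$ and positive weights $W_i$ with $\sum_i W_i = 1$ satisfy $\sum_i W_i g(v_i) = 0$, also $\sum_i W_i h(v_i) = 0$.

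Under the $2$-dimensional Gauss-map bijection $g : \partial(K\cap H) \to \partial\bigl((K\cap H)^*\bigr)$, define $\eta$ on the dual boundary by $\eta(g(v)) = h(v)$. The display above reads: for every finitely supported probability measure $\mu$ on $\partial\bigl((K\cap H)^*\bigr)$ with $\int y\, d\mu(y) = 0$, we have $\int \eta\, d\mu = 0$. Consider the signed-measure functionals $F_1(\mu) = \int y\, d\mu$ and $F_2(\mu) = \int \eta\, d\mu$. Adding a large multiple of any fixed probability measure $\mu_0$ with $F_1(\mu_0) = 0$ to an arbitrary signed measure in $\ker F_1$ lifts the implication to $\ker F_1 \subseteq \ker F_2$ on all signed measures; hence $F_2 = L_H \circ F_1$ for a linear $L_H : H \to H^\perp$, and evaluation on point masses gives $\eta(y) = L_H(y)$. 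Consequently $\nabla\varphi(v)$ lies in the $2$-dimensional subspace $V_H := \{w + L_H w : w \in H\}$ of $\R^n$ for every $v \in H \setminus 0$.

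Projectivizing $\nabla\varphi$ yields a continuous bijection $\Phi : \R P^{n-1} \to \R P^{n-1}$ (the Gauss map is a homeomorphism onto $\partial K^*$). The previous paragraph says $\Phi$ carries each projective line $[H]$ into the projective line $[V_H]$; injectivity and a short topological argument (a circle does not inject continuously into a proper arc of a circle) promote this to equality, so $\Phi$ sends projective lines onto projective lines. Since $n - 1 \geq 2$, the Fundamental Theorem of Projective Geometry supplies a linear automorphism $A$ of $\R^n$ with $\nabla\varphi(v) \parallel A v$. Combined with Euler's identity $\ip{\nabla\varphi(v)}{v} = \varphi(v)$ and equality of mixed partials of $\varphi$ — which, using $n \geq 3$, forces $A$ to be symmetric — a direct computation gives $\varphi(v)^2 = c\ip{Av}{v}$ for some $c > 0$, so $\varphi$ is Euclidean and $K$ is an ellipsoid. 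I expect the main obstacle to be the third paragraph: promoting the pointwise vanishing condition into full linearity of $\eta$ requires enough realizable $2$-dimensional geometric-median configurations with positive weights, and one must handle non-differentiability (e.g.\ when $\xi^*$ equals some $p_i$) via subdifferentials. The smoothing reduction is an additional technical point — either verify the intuitive property survives suitable smooth approximants, or run the linearity argument at the dense set of smooth directions and extend by continuity.
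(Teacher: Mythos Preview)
Your core mechanism matches the paper's: for each $2$-plane $H$ through the origin, the intuitive hypothesis forces $\nabla\varphi(H)$ into a fixed $2$-plane $V_H$. Your paragraphs 2--3 are a measure-theoretic repackaging of the paper's Lemma~\ref{lem:xyW} and Proposition~\ref{prop:smooth}; the paper simply fixes two anchor points $x,y\in\partial(K\cap H)$ with independent $g_x,g_y$ and, for each $z$, exhibits a weighted configuration on $\{\pm x,\pm y,z\}$ with $0$ as its $2$-dimensional median, whence the ambient first-order condition places $G_z\in\spn(G_x,G_y)=V_H$. (Your ``lift to signed measures by adding $C\mu_0$'' is not literally correct---adding a multiple of a measure with different support does not kill the negative part---but it is immediately repaired using central symmetry, i.e.\ adding $|c_i|(\delta_{y_i}+\delta_{-y_i})$, which is exactly the $\pm x,\pm y$ trick.) Where you genuinely diverge is the endgame: the paper reduces to $n=3$ via sections and invokes the Montejano--Morales characterization (Proposition~\ref{prop:MM}), which is precisely ``every planar section lies in a shadow boundary $\Rightarrow$ ellipsoid''; you stay in $\R^n$, projectivize the Gauss map, and use the Fundamental Theorem of Projective Geometry plus an integration. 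Your route is more self-contained; the paper black-boxes this step into a citation.

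The place your route costs more is the non-smooth case. The paper's argument is robust there: Proposition~\ref{prop:MM} only asks that each planar section lie in \emph{some} shadow boundary, a subgradient statement that passes to limits without a globally defined Gauss map, and Claim~\ref{clm:denseH} plus a short continuity argument upgrades the dense family of good $\ell$'s to all of $\bS$. Your FTPG step, by contrast, needs $\Phi$ to be an honest bijection of $\R P^{n-1}$, which simply fails if $K$ is not smooth; your proposed fix (``run at smooth directions and extend by continuity'') does not by itself manufacture such a bijection. To close the gap you would need either to prove directly that an intuitive $K$ must be smooth and strictly convex, or to route through the $3$-dimensional reduction and a Blaschke-type result after all. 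A smaller issue in the same vein: your last step (``equality of mixed partials forces $A$ symmetric, then $\varphi^2=c\ip{Av}{v}$'') tacitly uses $C^2$, not just $C^1$; the argument is correct under that hypothesis (for $n\geq3$ the rank-two antisymmetric left side cannot match a fixed nonzero $A^T-A$ as $Av$ sweeps $\R^n$), but you should flag it.
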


Theorem~\ref{thm:geq3} shows, in particular, that polytopes are never intuitive (for $n \geq 3$).
More generally, that if a norm is intuitive then it must be strictly convex
and smooth. 

It is also natural to consider a stronger definition
where instead of ``there is a geometric median in the convex hull''
we require ``every geometric median is in the convex hull''.
Instead of $\GM_K(W) \cap \conv(P) \neq \emptyset$,
we require $\GM_K(W) \subseteq \conv(P)$.
This stronger definition never holds when the boundary of $K$
has a flat part even just in the case that $P$ comprises two points. 
In particular, this stronger definition never holds for polytopes. 
The reason is that if the boundary of $K$ contains a flat part $F$
then for every $p$ in the interior of $F$,
the geometric median of $P = \{0,p\}$ contains an open set
but $\conv(P)$ is one-dimensional. 
For example, in the plane the geometric median of $(0,1)$
and $(1,0)$ over the $\ell_1$ norm is the entire unit square.
Because ellipsoids are strictly convex, however,
Theorem~\ref{thm:geq3} shows that in dimensions at least three,
the two definitions are equivalent.

We can reduce the case $n >3$ in Theorem~\ref{thm:geq3} to the case $n=3$ as follows. 
A section of $K$ is a set of the form $K \cap H$
for a linear space $H \subset \R^n$.
The dimension of the section is the dimension of $H$.
We think of the section as a subset of the Euclidean space~$H$.

\begin{obs*}
If $K$ is intuitive then every section of $K$ is intuitive. 
\end{obs*}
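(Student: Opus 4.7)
The plan is to use the identification between the norm on $H$ induced by the section $K\cap H$ and the restriction of the ambient norm $\|\cdot\|_K$. Specifically, for any vector $v\in H$, one has $v\in t(K\cap H)$ iff $v\in tK$ (the condition $v\in tH$ is automatic since $v\in H$ and $t>0$), so
\[
\|v\|_{K\cap H} \;=\; \|v\|_K \qquad \text{for every } v\in H.
\]
I would first verify that $K\cap H$ is indeed a symmetric convex body in $H$: symmetry and compactness are inherited from $K$, and $K\cap H$ has non-empty interior relative to $H$ because $K$ contains a neighborhood of the origin in $\R^n$.

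Next, fix data $P\subset H$ with weights $W\in\Delta_P$. Because the two norms agree on $H$, the objective functions agree there too:
\[
F_{K\cap H,W}(\xi) \;=\; F_{K,W}(\xi) \qquad \text{for every } \xi\in H.
\]
Since $K$ is intuitive in $\R^n$, there exists a global minimizer $\xi^*$ of $F_{K,W}$ on $\R^n$ with $\xi^*\in\conv(P)$. As $\conv(P)\subseteq H$, in particular $\xi^*\in H$, and the global minimality over $\R^n$ implies minimality over the subset $H$. By the identification above, $\xi^*$ is then a minimizer of $F_{K\cap H,W}$ on $H$, i.e., a geometric median of $W$ over $K\cap H$ lying in $\conv(P)$. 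This establishes intuitivity of $K\cap H$.

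There is essentially no obstacle here; the statement is almost tautological once the norm identification is made. The only tiny care needed is in confirming that $K\cap H$ really is a valid symmetric convex body in the Euclidean space $H$, so that ``intuitive'' is a meaningful notion for it.
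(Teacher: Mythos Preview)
Your proposal is correct and follows essentially the same approach as the paper: both arguments rest on the norm identification $\|v\|_{K\cap H}=\|v\|_K$ for $v\in H$, then use intuitivity of $K$ to produce a geometric median in $\conv(P)\subseteq H$ and conclude it lies in $\GM_{K\cap H}(W)$. Your write-up is simply more explicit about the intermediate steps (verifying $K\cap H$ is a body, noting that global minimality implies minimality on $H$), whereas the paper compresses the same logic into a single displayed line.
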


\begin{proof}
Assume $K$ is intuitive. 
If $H$ is a subspace and $\xi \in H$ then
$\|\xi\|_{K \cap H} = \|\xi\|_K$. 
So, if $P \subset H$ and $W \in \Delta_P$
then 
\begin{equation*}
\emptyset \neq \GM_{K}(W) \cap \conv(P) 
= \GM_{K \cap H}(W) \cap \conv(P).
\qedhere
\end{equation*}
\end{proof}

There are many known characterizations of ellipsoids
(see e.g.~\cite{petty1983ellipsoids,soltan2019characteristic}).
The following well-known lemma provides one such characterization
(a concrete reference is Theorem (3.1) in~\cite{petty1983ellipsoids}).
In Section~\ref{sec:3} we mention a second characterization 
that we shall use.
The lemma can, e.g., be proved by 
Jordan and von Neumann's result that 
$K$ is an ellipsoid iff the parallelogram law
$\|x+y\|_K^2+\|x-y\|_K^2 = \|x\|_K^2 + \|y\|_K^2$ hold for all $x,y \in \R^n$,
which is a two-dimensional condition.

\begin{lemma*}
A convex body $K \subset \R^n$ is an ellipsoid iff
every $3$-dimensional section of $K$ is an ellipsoid.
\end{lemma*}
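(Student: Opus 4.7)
The plan is to invoke the Jordan--von Neumann theorem mentioned in the paragraph preceding the lemma. The forward direction will be essentially a one-liner: if $K = \{x \in \R^n : \ip{Ax}{x} \leq 1\}$ for some positive definite $A$, then for any linear subspace $H \subset \R^n$ the section $K \cap H$ is the ellipsoid in $H$ cut out by the restriction of the same quadratic form to $H$.

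For the substantive direction, I would assume every $3$-dimensional section of $K$ is an ellipsoid and verify that the parallelogram identity
\[
\|x+y\|_K^2 + \|x-y\|_K^2 = 2\|x\|_K^2 + 2\|y\|_K^2
\]
holds for every $x, y \in \R^n$; Jordan--von Neumann then upgrades this to the conclusion that $\|\cdot\|_K$ is Euclidean, i.e., that $K$ is an ellipsoid. To establish the identity, given $x, y \in \R^n$ I would let $H$ be any $3$-dimensional linear subspace containing $\spn(x,y)$; such an $H$ exists because $\dim \spn(x,y) \leq 2$ (and we may assume $n \geq 3$, else the hypothesis is vacuous and only the trivial case $n = 3$ is covered). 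By hypothesis $K \cap H$ is an ellipsoid in $H$, so the norm $\|\cdot\|_{K \cap H}$ is Euclidean on $H$ and satisfies the parallelogram law there. As already observed earlier in the paper, $\|\xi\|_{K \cap H} = \|\xi\|_K$ for every $\xi \in H$, so the identity transfers verbatim to $\|\cdot\|_K$ applied to $x$ and $y$.

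I do not anticipate a real obstacle: the whole argument rests on the simple observation that the parallelogram law is a condition on a pair of vectors, hence can be localized to any subspace containing that pair, and three dimensions is more than enough room. The only minor point to get right is ensuring that the norm on a section agrees with the ambient norm restricted to the subspace, which is the same triviality the paper used when reducing intuitiveness to sections.
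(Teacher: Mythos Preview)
Your proposal is correct and follows exactly the route the paper indicates: the paper does not give a full proof but remarks that the lemma follows from the Jordan--von Neumann characterization since the parallelogram law is a two-dimensional condition, and you have simply fleshed out that sketch. (Note the paper's displayed parallelogram identity is missing the factor of~$2$ you correctly include.)
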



The observation and the lemma imply that in order to prove
Theorem~\ref{thm:geq3} it suffices to prove the following. 

\begin{theorem}
\label{thm:3}
If $K \subset \R^3$ 
is intuitive then $K$ is an ellipsoid.
\end{theorem}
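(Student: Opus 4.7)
My plan is to recast intuitiveness as a rigid incidence condition on the Gauss map of $\partial K$, and then invoke a classical characterization of the ellipsoid. As a preliminary I would argue that an intuitive body must be smooth and strictly convex: a flat facet or a corner of $\partial K$ allows one to place points whose canonically chosen geometric median escapes $\conv(P)$, in the spirit of the $\ell_\infty^3$ obstruction. Granting this, $\|\cdot\|_K$ is $C^1$ off the origin, the Gauss map $\nu_K : \partial K \to \bS$ is a centrally-odd homeomorphism, $F_{K,W}$ is strictly convex for non-collinear $P$, and the geometric median is a single point $\xi^\ast$.

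Next I would translate intuitiveness into a Gauss-map statement. Normalizing $\xi^\ast = 0$ and writing each $p_i = r_i d_i$ with $d_i \in \partial K$ and $r_i > 0$, the $0$-homogeneity and oddness of $\nabla \|\cdot\|_K$ show that the first-order condition at $0$ is equivalent (modulo positive rescaling of weights) to $0 \in \conv\bigl(\nu_K(\{d_i\})\bigr)$. At the same time, whether $0 \in \conv(\{p_i\})$ is independent of the radii $r_i > 0$ and is equivalent to $0 \in \conv(\{d_i\})$. Thus intuitiveness is equivalent to
$$0 \in \conv\bigl(\nu_K(D)\bigr) \ \Longrightarrow\ 0 \in \conv(D) \qquad \text{for every finite } D \subset \partial K.$$

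The crux is to promote this to: for every $2$-plane $H \subset \R^3$ through the origin, $\nu_K$ maps the equator $\partial K \cap H$ onto a great circle of $\bS$. Taking contrapositives, if $D$ lies in an open half-space through $0$, then so does $\nu_K(D)$. Applying this along a dense exhaustion of the open cap $\partial K \cap \{\ip{u}{\cdot} > 0\}$ and extracting a limit in the compact space of half-spaces, I would produce a single $w = w(u) \in \bS$ with $\nu_K\bigl(\partial K \cap \{\ip{u}{\cdot} > 0\}\bigr) \subset \{\ip{w}{\cdot} \geq 0\}$. Central symmetry sends the opposite cap into $\{\ip{w}{\cdot} \leq 0\}$, and continuity forces the equator $\partial K \cap u^\perp$ into the great circle $\{\ip{w}{\cdot} = 0\}$; since $\nu_K$ is a homeomorphism, the image is a Jordan curve inside a great circle and therefore fills it.

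Finally, the induced map on $2$-planes through the origin --- equivalently, via the radial identifications $\partial K/\pm \cong \mathbb{RP}^2 \cong \bS/\pm$, a bijection of $\mathbb{RP}^2$ --- preserves projective lines. Invoking the classical projective characterization of the ellipsoid (which I suspect is the second characterization referenced in Section~\ref{sec:3}), I would conclude that $\nu_K(x) \parallel Ax$ for some invertible linear $A:\R^3\to\R^3$, whence integration using the $1$-homogeneity of $\|\cdot\|_K$ forces $\|x\|_K = \sqrt{\ip{A_s x}{x}}$ for the symmetric part $A_s$, so $K$ is an ellipsoid. The main obstacle is the uniformity step above --- passing from the per-finite-configuration statement to a single half-space containing the Gauss image of an entire cap, and verifying that the equator image exhausts rather than strictly refines the target great circle --- after which the classical ellipsoid characterization, though itself nontrivial, closes the argument.
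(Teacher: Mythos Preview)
The core of your argument---translating intuitiveness into the implication $0\in\conv(\nu_K(D))\Rightarrow 0\in\conv(D)$ for finite $D\subset\partial K$, and then running a half-space compactness argument to show that $\nu_K$ sends each equator $\partial K\cap u^\perp$ onto a great circle---is correct and lands exactly on the hypothesis of Proposition~\ref{prop:MM}; you could cite Montejano--Morales there and stop. (Your final integration step from $\nu_K(x)\parallel Ax$ would need care about the antisymmetric part of $A$, but it is salvageable: $\nu_K(x)\parallel Ax$ makes the shadow boundary in direction $\ell$ equal to the planar section $\partial K\cap(A^T\ell)^\perp$, and Blaschke finishes.) This route to the shadow-boundary condition is genuinely different from the paper's: where the paper builds, for each section $H$ and each $z\in\partial(K\cap H)$, a specific configuration on $\{\pm x,\pm y,z\}$ (Lemma~\ref{lem:xyW}) and reads off that $G_z$ lies in the fixed plane spanned by $G_x,G_y$ (Proposition~\ref{prop:smooth}), you work globally over all finite configurations and extract the plane by compactness in the space of half-spaces.

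The genuine gap is your preliminary. You need smoothness and strict convexity of $K$ throughout: to make $\nu_K$ a single-valued homeomorphism, to guarantee the geometric median is unique (without which $0\in\GM_K(W)$ and intuitiveness yield only $\GM_K(W)\cap\conv(D)\neq\emptyset$, not $0\in\conv(D)$, so your key implication fails), and to run the Jordan-curve and projective-bijection steps. But under the paper's definition---\emph{some} median lies in $\conv(P)$---a flat facet or a corner does not obviously preclude intuitiveness; the paper explicitly notes that the easy two-point obstruction applies only to the \emph{stronger} ``every median'' definition. Your $\ell_\infty^3$ obstruction does work (uniform weights on $\{e_1,e_2,e_3\}$ have unique median $(\tfrac12,\tfrac12,\tfrac12)\notin\conv(P)$), but promoting a local corner or facet on a general body into a configuration with \emph{no} median in the hull is not routine, and you have not supplied it. The paper avoids this issue entirely: it never assumes global smoothness, needs only two differentiable boundary points per section---available for a dense set of sections by Claim~\ref{clm:denseH} via almost-everywhere differentiability of convex functions---and then extends to all sections by a continuity argument on subgradients.
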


\begin{remark}
The proof of the theorem works for a more general class
of norms than ``intuitive norms''. 
It works for all symmetric convex bodies $K$
so that for all sets $P$ of three points and $W \in \Delta_P$,
we have that $\GM_{K}(W) \cap A_P \neq \emptyset$,
where $A_P$ is the affine span of $P$. 
All convex bodies that satisfy this condition are ellipsoids. 
\end{remark}

\section{Sub-gradients}

To argue about general norms, we need the language of sub-gradients. 
In this section, we recall what are sub-gradients and describe some of their basic properties.
The sub-gradient of $f : \R^n \to \R$ at $\xi \in \R^n$ is 
$$\partial f(\xi) = \{ g \in \R^n : \forall y \in \R^n \ f(y) \geq f(\xi) + \ip{g}{y-\xi}\}.$$
If $f$ is convex then $\partial f(\xi)$ is non-empty and convex for all $\xi$. 
If $f$ is also differentiable at $\xi$ then the sub-gradient is unique
(and is equal to the gradient). 
The sum property says that if $f_1,f_2: \R^n \to \R$ are convex,
then for all $\xi \in \R^n$,
$$\partial(f_1+f_2)(\xi) = \partial f_1(\xi) + \partial f_2(\xi),$$
where $A+B = \{ a+b : a \in A, b \in B\}$ is Minkowski sum. 
For $p \in \R^n$, 
we use the notation 
$$F_{K,p}(\xi) = \|\xi - p\|_K.$$
When $K$ is clear from the context,
we omit the subscript $K$ from $\|\cdot\|_K$,
from $F_{K,W}$ and from $F_{K, p}$

\begin{claim}
\label{clm:symG}
Let $K \subset \R^n$ be a symmetric convex body.
For all $p \in \R^n$, 
$$\partial F_0(p) = - \partial F_p(0).$$
\end{claim}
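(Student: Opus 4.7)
The plan is to reduce everything to two elementary facts about how sub-gradients transform under translation and reflection, using only the symmetry $K=-K$ and the definition of $F_{K,p}$.

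First I would observe that $F_p$ is simply a translate of $F_0$: for every $\xi$,
$$F_p(\xi) = \|\xi-p\|_K = F_0(\xi-p).$$
From this and the definition of sub-gradient one sees that $\partial F_p(\xi) = \partial F_0(\xi-p)$ for every $\xi$. Specializing to $\xi=0$ yields
$$\partial F_p(0) = \partial F_0(-p).$$
So it suffices to show $\partial F_0(p) = -\partial F_0(-p)$.

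Next I would use the symmetry hypothesis $K=-K$, which is exactly the statement that $\|\cdot\|_K$ is even, i.e.\ $F_0(\eta)=F_0(-\eta)$ for every $\eta$. I would then verify by a direct substitution that even functions satisfy $\partial F_0(-p)= -\partial F_0(p)$: if $g \in \partial F_0(p)$, then for every $y$, $F_0(y)\geq F_0(p)+\ip{g}{y-p}$; applying this inequality to $-y$ in place of $y$ and using $F_0(-y)=F_0(y)$ and $F_0(p)=F_0(-p)$ gives $F_0(y)\geq F_0(-p)+\ip{-g}{y-(-p)}$, which is exactly the condition that $-g \in \partial F_0(-p)$. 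Reversing the roles proves the opposite inclusion.

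Combining the two steps gives $\partial F_p(0)=\partial F_0(-p)=-\partial F_0(p)$, i.e.\ $\partial F_0(p)=-\partial F_p(0)$, as claimed. There is no real obstacle here — the statement is essentially a bookkeeping consequence of the two symmetries (translation and reflection) built into the definition of a symmetric norm — and the only thing one must be careful about is applying the translation and reflection identities at the correct points.
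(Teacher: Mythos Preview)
Your proof is correct and is essentially the same as the paper's: both arguments amount to the two substitutions $y\mapsto -y$ (using $K=-K$) and $y\mapsto y-p$ (translation), which you package as two separate lemmas while the paper runs them as a single chain of equivalences. Your intermediate step $\partial F_0(-p)=-\partial F_0(p)$ is exactly the content of the paper's next claim (Claim~\ref{clm:symG1}), so your route effectively proves both claims at once.
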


\begin{proof}
\begin{align*}
& g \in \partial F_0(p) \\
\iff & \forall y \in \R^n \ \|y\| \geq \|p\| + \ip{g}{y-p} \\
\iff & \forall y \in \R^n \ \|y\| \geq \|p\| + \ip{-g}{y+p} \\
\iff &  \forall y \in \R^n \ \|y-p\| \geq \|0-p\| + \ip{-g}{y-0} \\
\iff & - g \in \partial F_p(0) . \qedhere
\end{align*}
\end{proof}

\begin{claim}
\label{clm:symG1}
Let $K \subset \R^n$ be a symmetric convex body.
For all $p \in \R^n$, 
$$\partial F_0(p) = - \partial F_0(-p).$$
\end{claim}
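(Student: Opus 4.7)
The plan is to mimic the chain-of-equivalences argument used in Claim~\ref{clm:symG}, replacing the translation trick there by the reflection $y \mapsto -y$ and using the fact that $F_0$ is an even function because $K$ is symmetric. Concretely, the key input is that $\|\xi\|_K = \|-\xi\|_K$ for every $\xi \in \R^n$, which follows from $K = -K$.

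First, I would unfold the definition: $g \in \partial F_0(p)$ means that for all $y \in \R^n$, $\|y\|_K \geq \|p\|_K + \ip{g}{y-p}$. Then I would substitute $y = -z$ (as $z$ ranges over $\R^n$, so does $y$), which turns the inequality into $\|{-z}\|_K \geq \|p\|_K + \ip{g}{-z - p}$. Using the symmetry of $K$ to rewrite $\|{-z}\|_K = \|z\|_K$ and $\|p\|_K = \|{-p}\|_K$, and rearranging the inner product as $\ip{g}{-z-p} = \ip{-g}{z-(-p)}$, the inequality becomes $\|z\|_K \geq \|{-p}\|_K + \ip{-g}{z - (-p)}$ for every $z$, which is exactly $-g \in \partial F_0(-p)$.

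Since each step is an ``iff'', the two sets are equal. I do not expect a genuine obstacle here: the only substantive ingredient is the symmetry $K = -K$ (which makes $F_0$ even), and everything else is a mechanical manipulation of the subgradient inequality, completely parallel in style to the proof of Claim~\ref{clm:symG}.
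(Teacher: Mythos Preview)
Your argument is correct and is essentially identical to the paper's own proof: both substitute $y \mapsto -y$ in the subgradient inequality, invoke the evenness $\|\cdot\|_K$ coming from $K=-K$, and rewrite $\ip{g}{-y-p}=\ip{-g}{y-(-p)}$ to conclude $-g \in \partial F_0(-p)$. The only cosmetic difference is that you introduce a fresh variable name $z$ for the substituted quantity, whereas the paper reuses $y$.
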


\begin{proof}
\begin{align*}
& g \in \partial F_0(p) \\
\iff & \forall y \in \R^n \ \|y\| \geq \|p\| + \ip{g}{y-p} \\
\iff & \forall y \in \R^n \ \|y\| \geq \|p\| + \ip{g}{-y-p} \\
\iff & \forall y \in \R^n \ \|y\| \geq \|- p\| + \ip{-g}{y-(-p)} \\
\iff & - g \in \partial F_0(-p) . \qedhere
\end{align*}
\end{proof}

\begin{claim}
\label{clm:Gc}
Let $K \subset \R^n$ be a symmetric convex body.
For all $p \in \R^n$ and $c>0$, 
$$\partial F_0(c p) = \partial F_0(p).$$
\end{claim}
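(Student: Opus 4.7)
The plan is to prove this directly from the definition of the subgradient, using only the positive homogeneity of the norm: $\|cz\|_K = c\|z\|_K$ for all $c > 0$ and $z \in \R^n$, which holds because $K$ is a symmetric convex body (in particular a Minkowski gauge).

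Concretely, I would unfold the condition $g \in \partial F_0(cp)$ to read: for every $y \in \R^n$, $\|y\| \geq \|cp\| + \ip{g}{y - cp} = c\|p\| + \ip{g}{y} - c\ip{g}{p}$. Then I would perform the change of variable $y = cz$, noting that $z \mapsto cz$ is a bijection of $\R^n$ precisely because $c > 0$ (so the universal quantifier over $y$ is equivalent to the universal quantifier over $z$). After substituting and using $\|cz\| = c\|z\|$, the inequality becomes $c\|z\| \geq c\|p\| + c\ip{g}{z} - c\ip{g}{p}$. Dividing through by $c > 0$ yields $\|z\| \geq \|p\| + \ip{g}{z - p}$ for every $z$, which is exactly the condition $g \in \partial F_0(p)$.

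There is essentially no obstacle here: the whole argument is the observation that $F_0$ is positively $1$-homogeneous, hence its subgradient is constant along open rays from the origin, together with the fact that the bijectivity of scaling by a positive constant lets us move the universal quantifier freely. The only thing to be careful about is that $c > 0$ (not merely $c \neq 0$), because for $c < 0$ the homogeneity identity $\|cz\| = c\|z\|$ would fail; the case $c < 0$ is instead handled by Claim~\ref{clm:symG1}. The argument can be written in the same $\iff$-chain format used in the proofs of Claims~\ref{clm:symG} and~\ref{clm:symG1}.
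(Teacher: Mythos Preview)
Your proposal is correct and is essentially identical to the paper's proof: the paper also unfolds the subgradient condition, substitutes $y \mapsto cy$ (using that this is a bijection), applies positive homogeneity, and divides by $c$, all in an $\iff$-chain.
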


\begin{proof}
\begin{align*}
& g \in \partial F_0(c p) \\
\iff & \forall y \in \R^n \ \|y\| \geq \|c p\| + \ip{g}{y-c p} \\
\iff & \forall y \in \R^n \ \|cy\| \geq \|c p\| + \ip{g}{c y-c p} \\
\iff & \forall y \in \R^n \ \|y\| \geq \| p\| + \ip{g}{y-p} \\
\iff & g \in \partial F_0(p) . \qedhere
\end{align*}
\end{proof}

The boundary of $K$ is denoted by $\partial K$;
we have $\partial K = \{ p : \|p\|_K=1\}$. 
For $p \in \partial K$,
a tangent to $K$ at $p$ is a supporting hyperplane $H$ to $K$
so that $p \in H$.

\begin{claim}
\label{clm:g}
Let $K \subset \R^n$ be a symmetric convex body, and
let $p \in \partial K$.
Every $g \in \partial F_0(p)$ is a normal to a tangent to $K$ at $p$.
\end{claim}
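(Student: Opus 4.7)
The plan is to unpack the definition of the subgradient of $F_0 = \|\cdot\|_K$ at $p$ and specialize the resulting inequality to points of $K$. By assumption $g \in \partial F_0(p)$ means
\[
\|y\|_K \geq \|p\|_K + \ip{g}{y - p} = 1 + \ip{g}{y-p} \quad \text{for all } y \in \R^n,
\]
using $\|p\|_K = 1$ since $p \in \partial K$.

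The key step is to restrict $y$ to lie in $K$. For any $y \in K$ we have $\|y\|_K \leq 1$, so the displayed inequality becomes $1 \geq 1 + \ip{g}{y - p}$, i.e.\ $\ip{g}{y} \leq \ip{g}{p}$. Hence the affine hyperplane
\[
H = \{ y \in \R^n : \ip{g}{y} = \ip{g}{p}\}
\]
contains $p$ and has $K$ entirely in the closed half-space $\{y : \ip{g}{y} \leq \ip{g}{p}\}$, which is exactly the definition of $H$ being a supporting hyperplane of $K$ at $p$.

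The only caveat is that we must rule out $g = 0$ so that $H$ is actually a hyperplane and $g$ a genuine normal. This is immediate: specializing the subgradient inequality to $y = 0$ gives $0 = \|0\|_K \geq 1 + \ip{g}{-p} = 1 - \ip{g}{p}$, so $\ip{g}{p} \geq 1 > 0$, forcing $g \neq 0$. This step is not really an obstacle; the whole proof is a direct unwinding of the definition of subgradient together with the defining property $\|y\|_K \leq 1 \iff y \in K$.
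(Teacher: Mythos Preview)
Your proof is correct and follows essentially the same approach as the paper: both restrict the subgradient inequality to points of $K$ (the paper uses interior points to get a strict inequality, you use all of $K$ to get the weak one) and read off that the hyperplane $\{y : \ip{g}{y-p}=0\}$ supports $K$ at $p$. Your explicit verification that $g \neq 0$ via $\ip{g}{p} \geq 1$ is a detail the paper leaves implicit.
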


\begin{proof}
The hyperplane
$\{y : \ip{g}{y-p} = 0\}$ contains $p$,
and the vector $g$ is normal to it. 
Every $y$ of norm less than one
satisfies $1 > \|y\| \geq \|p\| + \ip{g}{y-p}$ with $\|p\|=1$ so
$\ip{g}{y-p} < 0$. 
\end{proof}

\section{The plane}

\begin{proof}[Proof of Theorem~\ref{thm:2}]
Fix a norm $\|\cdot\|  =\|\cdot\|_K$ on the plane. 
Let $P \subset \R^2$ be finite and $W \in \Delta_P$.
Assume towards a contradiction that $\GM_K(W) \cap \conv(P) = \emptyset$.
The sets $\GM_K(W)$ and $\conv(P)$ are convex
so we can use the convex separation theorem. 
We can assume, by translation, 
that there is a line $\ell$ passing through
$0 \in \GM_K(W)$ and separating $\GM_K(W)$ and $\conv(P)$.
By rotation, we can assume that $\ell$ is the $e_1$-axis and that $P$
is strictly above the $e_1$-axis. 
Let $x_*$ is the intersection point of the $e_1$-axis and $\partial K$;
see an illustration in Figure~\ref{fig:1}.

\begin{figure}
\centering

    \begin{tikzpicture}
\draw[->,ultra thin] (-3,0)--(3,0) node[right]{$e_1$};
\draw[->,ultra thin] (0,-1.7)--(0,1.7) node[above]{$e_2$};
\draw plot[smooth cycle] coordinates {(-1,1) (1,1) (2,0.3) (1,-1) (-1,-1) (-2,-0.3)};
\draw (-1.9,0) node {\textbullet};
\draw (1.9,0) node {\textbullet};
\draw[->,thick,dotted] (1.3,-1)--(2.7,1.3) node{};
      \end{tikzpicture}%
      \caption{A two-dimensional symmetric convex body $K$. 
      The points $P$ are in the upper half plane. 
      The two marked 
      points are $x_*,-x_*$. The dotted tangent line is in direction $u$.}
      \label{fig:1}
  \end{figure}
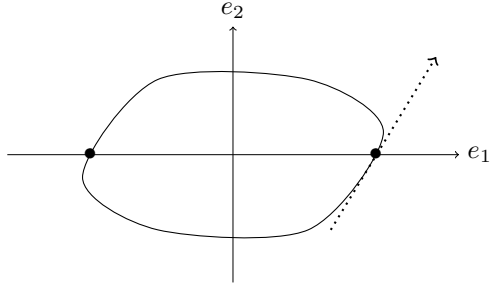

First order optimality implies that $0 \in \partial F_W(0)$.
By the sub-gradient sum property 
and by Claim~\ref{clm:symG},
we can deduce that for each $p \in P$,
there is $g_p \in \partial F_0(p) = - \partial F_p(0)$ so that
\begin{align}
\label{eqn:foo}
\sum_{p \in P} W(p) g_p = 0.
\end{align}
If we delete $x_*,-x_*$ from $\partial K$, then $\partial K$
is broken into two connected components. 
Let $Q \subset \partial K$ be the set of points of the form $\frac{p}{\|p\|}$ for $p \in P$.
All points in $Q$ belong to the same connected component,
which we denote by $\Gamma$ and
is homeomorphic to a line segment.
Let $u$ be a direction of a tangent
to $K$ at $x_*$.
Direct $u$ so that its inner product with $e_2$ is positive
(it cannot be zero because $K$ is symmetric and
has non-empty interior).

We claim that 
$\ip{g_z}{u} \geq 0$ for all $z \in \Gamma$ and $g_z \in \partial F_0(z)$.
Indeed, let $z \in \Gamma$.
The intersection of $K$ and
the line parallel to $u$ through $z$ contains an open set.
So, for some $\eps > 0$ we have $z - \eps u \in K$.
As in the proof of Claim~\ref{clm:g},
the body $K$ is contained in
$\{y : \ip{g_p}{y-z} \leq 0\}$.
Therefore, $\ip{g_p}{z - \eps u-z} \leq 0$
which implies $\ip{g_p}{u} \geq 0.$

By Claim~\ref{clm:Gc},
it follows that for all $p \in P$, we have
$\ip{g_p}{u} \geq 0$.
Equation~\eqref{eqn:foo} now implies that
for all $p \in P$,
$$\ip{g_p}{u} =0.$$
Let $Z$ be the set of $z \in \partial K$ so that there is 
$g_z \in \partial F_0(z)$ so that $\ip{g_z}{u} =0$.
The set $Z$ is symmetric, closed and consists of two parts $Z_1$ and $Z_2 = -Z_1$.
The set $Z_1$ can be a point or an interval that is parallel to $u$.
If $Q \subset Z_1$, then all vectors in $\{g_p : p \in P\}$ point in the same direction
so they cannot sum to zero,
which is a contradiction to~\eqref{eqn:foo}.
A similar argument holds for $Z_2$.
Because $Q \subset Z_1 \cup Z_2$,
it follows that both $Q \cap Z_1$ and $Q \cap Z_2$ are non-empty.
Because $Q$ is contained in the open upper-half plane,
each of $Z_1,Z_2$ has a non-empty intersection with the open upper-half plane.
By symmetry, there is a non-zero intersection also with the open lower-half plane.
It follows that the point $\eps u$ for some small $\eps >0$
satisfies $\|\eps u - p\| = \| 0 - p\|_K$ for all $p \in P$.
Now, $\eps u \in \GM(W)$, which is again a contradiction.
\end{proof}

\section{Three dimensions}

\label{sec:3}

We describe the proof of Theorem~\ref{thm:3} in two stages.
We first consider the smooth case, and only then the general case.
The reason is that the smooth case already contains
the heart of the proof, but the general case involves several
technical complications. 
In both cases, we rely on a characterization of ellipsoids by Montejano and Morales \cite{morales2003variations},
which in turn relies on a theorem of Blaschke.

Let $\mathbb{S}^2 \subset \R^3$ be the two-dimensional sphere.
For $u \in \mathbb{S}^2$, we denote by $u^\perp$
the two-dimensional space in $\R^3$ that is orthogonal to $u$. 
For a convex body $K \subset \R^3$ and $H = u^\perp$,
we think of $H$ as $\R^2$ and of $F_{K \cap H,0}$ as a map $H \to \R$. 
For $\ell \in \mathbb{S}^2$,
denote by $\mathcal{T}(K, \ell)$ the union of all lines
that are tangent to $K$ and parallel to $\ell$.
The shadow boundary in direction $\ell$ is $\mathcal{S}(K, \ell) = \mathcal{T}(K, \ell) \cap K$.
For a subspace $H$, the boundary of $H \cap K$ is denoted by
$\partial(H \cap K) \subset H$.

\begin{prop}[\cite{morales2003variations}]
\label{prop:MM}
Let $K \subset \R^3$ be a symmetric convex body. 
If for every $\ell \in \mathbb{S}^2$,
there is $u = u_\ell \in \mathbb{S}^2$ so that $\partial(H \cap K) \subset \mathcal{S}(K,u)$
with $H = \ell^\perp$
then $K$ is an ellipsoid. 
\end{prop}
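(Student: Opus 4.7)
The plan is to reduce the proposition to a classical theorem of Blaschke: a smooth, strictly convex body $K\subset\R^3$ all of whose shadow boundaries $\mathcal{S}(K,u)$ are planar curves must be an ellipsoid. The task is therefore to upgrade the containment $\partial(H\cap K)\subseteq \mathcal{S}(K,u_\ell)$ given in the hypothesis into the statement that enough shadow boundaries of $K$ are planar.

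First I would reduce to the smooth strictly convex case by approximating $K$ by smooth strictly convex symmetric bodies and verifying that the hypothesis is stable under the limit (or reformulating it in a form preserved under limits). In the non-smooth setting $\mathcal{S}(K,u)$ can be a two-dimensional strip and $\partial(H\cap K)$ need not be smooth, so some additional care is required; this is a technicality rather than the conceptual difficulty.

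For smooth strictly convex $K$, both $\partial(H\cap K)=H\cap\partial K$ and $\mathcal{S}(K,u_\ell)$ are simple closed curves on the topological sphere $\partial K$. A simple closed curve on $\partial K$ cannot properly contain another simple closed curve, since a compact connected one-dimensional subset of a circle is either a point, a proper arc, or the entire circle. Hence the hypothesized containment forces equality: for each $\ell\in\mathbb{S}^2$, the shadow boundary $\mathcal{S}(K,u_\ell)$ equals the planar curve $\ell^\perp\cap\partial K$ and is therefore planar. Equivalently, every central planar section of $\partial K$ appears as a shadow boundary of $K$.

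The remaining step is to extend planarity of shadow boundaries from the directions $u_\ell$ to all directions $u\in\mathbb{S}^2$, so that Blaschke's theorem applies. For an ellipsoid defined by the quadratic form $A$, one computes that $u_\ell$ is proportional to $A^{-1}\ell$, so the map $\ell\mapsto u_\ell$ is an affine diffeomorphism of $\mathbb{S}^2$; this suggests that in general it is a continuous map of nontrivial degree, and a degree/continuity argument should show its image is all of $\mathbb{S}^2$. Establishing this control on $\ell\mapsto u_\ell$ is where I expect the main obstacle to lie, and is presumably where the Montejano--Morales argument invokes Blaschke's characterization with care.
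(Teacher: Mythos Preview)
The paper does not prove this proposition at all: it is quoted from Montejano and Morales \cite{morales2003variations} and used as a black box. The only information the paper gives is the remark that their argument ``in turn relies on a theorem of Blaschke,'' so your high-level strategy of reducing to Blaschke's planar-shadow-boundary characterization is consistent with what the paper reports.

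That said, as an independent sketch your argument has two real gaps. First, the reduction to the smooth strictly convex case by approximation goes in the wrong direction: you would need the approximating smooth bodies to satisfy the hypothesis (so that you can conclude each is an ellipsoid and pass to the limit), not merely that the hypothesis survives limits. There is no reason a generic smooth perturbation of $K$ should inherit the property that every central section is contained in some shadow boundary; this is a rigid condition, and perturbing $K$ will typically destroy it. Second, and you flag this yourself, the surjectivity of $\ell\mapsto u_\ell$ is the crux, and the degree heuristic is not yet an argument: the map is a priori only defined up to sign, need not be single-valued without smoothness, and you have not established continuity. Your containment-implies-equality step for two simple closed curves on $\partial K$ is fine in the smooth strictly convex setting, but both surrounding steps need substantial work before Blaschke can be invoked. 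Since the paper itself defers all of this to the cited reference, there is nothing further to compare against here.
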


The following lemma and proposition 
are the main steps in the proof of the theorem. 

\begin{lemma}
\label{lem:xyW}
Let $L \subset \R^2$ be a symmetric convex body.
Let $x,y \in \partial L$ be so that there are $g_x \in \partial F_0(x)$
and $g_y \in \partial F_0(y)$ that span $\R^2$.
For all $z \in \partial L$,
there is $W \in \Delta_P$ for $P \subset \{\pm x, \pm y, z\}$ 
so that $W(z)>0$ and
$$0 \in \GM_L(W).$$
\end{lemma}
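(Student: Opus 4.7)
The plan is to reduce the condition $0 \in \GM_L(W)$ to a single linear equation in $\R^2$ among prescribed subgradient vectors, and then solve that equation using the hypothesis that $g_x, g_y$ span $\R^2$. First-order optimality and the subgradient sum rule say that $0 \in \GM_L(W)$ iff there exist $h_p \in \partial F_p(0)$ for $p$ in the support of $W$ with $\sum_p W(p)\, h_p = 0$; by Claim~\ref{clm:symG} this is equivalent to finding $k_p \in \partial F_0(p)$ with $\sum_p W(p)\, k_p = 0$ (rename $k_p = -h_p$; the equation is invariant under this sign change).

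Next I would choose the subgradients at the candidate points explicitly. For $x$ and $y$, take $k_x := g_x$ and $k_y := g_y$. Claim~\ref{clm:symG1} gives $\partial F_0(-x) = -\partial F_0(x)$ and $\partial F_0(-y) = -\partial F_0(y)$, so $k_{-x} := -g_x$ and $k_{-y} := -g_y$ are legitimate choices. For $z$ pick any $k_z \in \partial F_0(z)$. The requirement $\sum_p W(p)\, k_p = 0$ then becomes
$$(W(x) - W(-x))\, g_x + (W(y) - W(-y))\, g_y + W(z)\, k_z = 0.$$

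To solve this I would use the spanning hypothesis to write $k_z = \alpha g_x + \beta g_y$ for unique $\alpha, \beta \in \R$, set $c := 1/(1 + |\alpha| + |\beta|) > 0$, and declare $W(z) := c$, so that the equation reduces to $W(x) - W(-x) = -c\alpha$ and $W(y) - W(-y) = -c\beta$. Assigning the total mass $c|\alpha|$ entirely to whichever of $W(x), W(-x)$ matches the sign of $-\alpha$ (with zero on the other), and similarly distributing $c|\beta|$ between $W(y), W(-y)$, produces nonnegative weights with $W(z) > 0$ and total mass $c(|\alpha| + |\beta| + 1) = 1$, so $W \in \Delta_P$ as required. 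The only step that requires genuine care is the sign bookkeeping around Claims~\ref{clm:symG} and~\ref{clm:symG1}; once that is settled, everything else is elementary linear algebra in $\R^2$, with the spanning hypothesis providing exactly the freedom needed to realize any $k_z$ as a combination of $\pm g_x, \pm g_y$.
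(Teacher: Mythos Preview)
Your proof is correct and follows essentially the same approach as the paper's: express a subgradient $g_z \in \partial F_0(z)$ in the basis $g_x, g_y$, use Claim~\ref{clm:symG1} to obtain subgradients $-g_x, -g_y$ at $-x, -y$, choose signs and normalize to get a probability vector with $W(z)>0$ and vanishing weighted subgradient sum, and then invoke Claim~\ref{clm:symG} together with the subgradient sum rule to conclude $0 \in \partial F_W(0)$, hence $0 \in \GM_L(W)$. The paper's proof is terser (it simply asserts that such $P$ of size at most three and $W$ exist), while you spell out the explicit weights $c = 1/(1+|\alpha|+|\beta|)$, but the underlying argument is identical.
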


\begin{proof}
Let $g_z \in \partial F_0(z)$
and write $$g_z = a_x g_x + a_y g_y$$ with $a_x,a_y \in \R$.
By Claim~\ref{clm:symG1}, we have $- g_x \in \partial F_0(-x)$ and similarly for $y$.
There is therefore $P \subset \{\pm x, \pm y, z\}$ of size at most three
and $W \in \Delta_P$ so that $W(z)>0$ and
$\sum_{p \in P} W(p) g_p = 0$.
By Claim~\ref{clm:symG}, we have
$- g_p \in \partial F_p(0)$.
By the sub-gradient sum property,
$\partial F_W(0) = \sum_{p \in P} W(p) \partial F_p(0)$.
It follows that
$0 \in \partial F_W(0)$ and so  $0$ is a minimizer of $F_W$.
\end{proof}

\begin{prop}
\label{prop:smooth}
Let $K \subset \R^3$ be intuitive.
Let $\ell \in \mathbb{S}^2$,
let $H = \ell^\perp$ and $L = K \cap H$.
Assume that
there are $x,y \in \partial L$ so that $F_{K,0}$
is differentiable at $x,y$ and there are $g_x \in \partial F_{L,0}(x)$
and $g_y \in \partial F_{L,0}(y)$ that span $H$.
Then,
there is $u = u_\ell \in \mathbb{S}^2$ so that
$\partial(L) \subset \mathcal{S}(K,u)$.
\end{prop}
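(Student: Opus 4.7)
The plan is to pick $u \in \mathbb{S}^2$ to be a unit vector orthogonal to the two $3$-dimensional gradients $G_x := \nabla F_{K,0}(x)$ and $G_y := \nabla F_{K,0}(y)$, and then to verify, via Lemma~\ref{lem:xyW} combined with the intuitiveness of $K$, that for every $z \in \partial L$ some sub-gradient of $F_{K,0}$ at $z$ lies in $\spn\{G_x, G_y\}$. This will force $\partial L \subset \mathcal{S}(K,u)$.

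Fix $z \in \partial L$. Applying Lemma~\ref{lem:xyW} inside the $2$-dimensional body $L \subset H$ yields $W \in \Delta_P$ with $P \subset \{\pm x, \pm y, z\}$, $W(z) > 0$, and $0 \in \GM_L(W)$. The next step is to lift this to $\R^3$: since $P \subset H$, intuitiveness of $K$ produces a minimizer $\eta$ of $F_{K,W}$ with $\eta \in \conv(P) \subset H$. Because $F_{K,W}$ and $F_{L,W}$ agree on $H$, and $0$ minimizes $F_{L,W}$,
\[
F_{K,W}(0) = F_{L,W}(0) \leq F_{L,W}(\eta) = F_{K,W}(\eta) = \min_{\xi \in \R^3} F_{K,W}(\xi),
\]
so $0 \in \GM_K(W)$. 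Then $0 \in \partial F_{K,W}(0)$, and the sub-gradient sum property together with Claim~\ref{clm:symG} produce $G'_p \in \partial F_{K,0}(p)$ for $p \in P$ with $\sum_{p \in P} W(p) G'_p = 0$. Differentiability of $F_{K,0}$ at $\pm x, \pm y$ (combined with Claim~\ref{clm:symG1}) pins down $G'_{\pm x} = \pm G_x$ and $G'_{\pm y} = \pm G_y$, so rearranging and using $W(z) > 0$ shows $G'_z \in \spn\{G_x, G_y\}$.

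To finish, I check that $G_x, G_y$ are linearly independent in $\R^3$: the restriction $F_{K,0}|_H$ equals $F_{L,0}$, so the orthogonal projections of $G_x, G_y$ onto $H$ are the (unique, by differentiability) sub-gradients of $F_{L,0}$ at $x$ and $y$, which must coincide with $g_x$ and $g_y$ and so span $H$ by hypothesis. Thus $u := (G_x \times G_y)/\|G_x \times G_y\|$ is well-defined. For every $z \in \partial L$, the sub-gradient $G'_z$ produced above satisfies $\ip{G'_z}{u} = 0$, so the line $z + \R u$ is contained in the supporting hyperplane $\{y : \ip{G'_z}{y-z} = 0\}$ of $K$ at $z$ guaranteed by Claim~\ref{clm:g}. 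Hence $z \in \mathcal{T}(K,u) \cap K = \mathcal{S}(K,u)$, giving $\partial L \subset \mathcal{S}(K,u)$.

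I expect the delicate point to be the lifting step: Lemma~\ref{lem:xyW} only asserts that $0$ minimizes $F_{L,W}$ on the $2$-plane $H$, and it is the intuitiveness hypothesis on $K$ in the ambient $\R^3$ that upgrades this to $0 \in \GM_K(W)$ — this is in fact the only place intuitiveness enters the argument. The remainder is careful sub-gradient bookkeeping plus the dimension count that lets a single direction $u$ work uniformly for all $z \in \partial L$.
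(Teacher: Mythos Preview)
Your proof is correct and follows essentially the same approach as the paper: apply Lemma~\ref{lem:xyW} in $H$, use intuitiveness to lift $0 \in \GM_L(W)$ to $0 \in \GM_K(W)$, invoke first-order optimality and the sum rule to express some $G'_z \in \partial F_{K,0}(z)$ in $\spn\{G_x,G_y\}$, and take $u$ orthogonal to that span. You in fact fill in a detail the paper leaves implicit, namely that $G_x,G_y$ are linearly independent because their orthogonal projections onto $H$ are $g_x,g_y$.
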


\begin{proof}
By Lemma~\ref{lem:xyW} with $H$ as $\R^2$,
for all $z \in \partial L$,
there is $W \in \Delta_P$ for $P \subset \{\pm x, \pm y, z\}$ 
so that $W(z)>0$ and
$$0 \in \GM_L(W).$$
Because $K$ is intuitive, there is $\xi \in \GM_K(W) \cap H$.
It follows that 
$$F_{K,W}(0) = F_{L,W}(0) \leq F_{L,W}(\xi) = F_{K,W}(\xi)$$
and in particular
$$0 \in \GM_K(W).$$
By first order optimality in $\R^3$
and the sub-gradient sum property,
$$0 = W(z) G_z + \sum_{p \neq z} W(p) G_p $$
where $G_p \in \partial F_{K,0}(p) = -\partial F_{K,p}(0)$ for $p \in P$.
Because $F_{K,0}$ is differentiable,
the sub-gradients $G_p$ for $p \in \{\pm x,\pm y\}$ are fixed (do not depend on $z$).
The two vectors $G_x,G_y$ span a vector space $V = V_\ell$ in $\R^3$.
Let $u \in \mathbb{S}^2$ be orthogonal to $V$.
Because $G_z \in V = u^\perp$, we know that $u \in (G_z)^\perp$
so $z \in \mathcal{S}(K,u)$.
\end{proof}

\begin{proof}[Proof of Theorem~\ref{thm:3} smooth case]
Let $K \subset \R^3$ be intuitive
and assume that $F_{K,0}$ is differentiable. 
We are going to prove that for all $\ell \in \mathbb{S}^2$,
there is $u = u_\ell \in \mathbb{S}^2$ so that
$\partial(K \cap H) \subset \mathcal{S}(K,u)$ where $H = \ell^\perp$.
Once we prove this, Proposition~\ref{prop:MM} completes the proof.

Fix $\ell$ and $H = \ell^\perp$. 
The set $L = K \cap H$ is a convex body in $H$ and so 
there are $x,y \in \partial L$ so that there are $g_x \in \partial F_{L,0}(x)$
and $g_y \in \partial F_{L,0}(y)$ that span $H$.
Because $K$ is smooth,
we know that $F_{K,0}$ is differentiable at $x,y$.
Proposition~\ref{prop:smooth} completes the proof. 
\end{proof}

In the proof for the smooth case, 
we located points $x,y$ where $F_{K,0}$ is differentiable
(when we used Proposition~\ref{prop:smooth}).
The reason is that we needed the sub-gradients $G_x,G_y$
to not depend on $z$ so that the space $V$
is ``the same for all $z$''. 
When $K$ is smooth, this is automatic, 
but in general we need the following claim. 

\begin{claim}
\label{clm:denseH}
Let $K \subset \R^3$ be a symmetric convex body.
Then, there is a dense set $\cU \subset \mathbb{S}^2$ so that
for all $u \in \cU$, there are $x,y \in \partial(H \cap  K)$ with $H = u^\perp$
so that $F_{K,0}$ is differentiable at $x,y$,
and there are $g_x \in \partial F_{K \cap H,0}(x)$
and $g_y \in \partial F_{K \cap H,0}(y)$ that span $H$.
\end{claim}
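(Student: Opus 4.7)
My plan is to combine two ingredients: the classical fact that a convex function on $\R^3$ is differentiable outside a Lebesgue null set, and a Fubini argument on the flag manifold $M = \{(u,v)\in \mathbb{S}^2\times \mathbb{S}^2 : u\perp v\}$. Since $F_{K,0}$ is convex, there is a Lebesgue null set $B \subset \R^3$ outside which it is differentiable; $B$ is a cone by positive homogeneity of $F_{K,0}$, so its trace $\tilde B := B \cap \mathbb{S}^2$ has spherical measure zero.

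I view $M$ as an $SO(3)$-invariant circle bundle over $\mathbb{S}^2$ via each of its two projections, with fibres the great circles $u^\perp \cap \mathbb{S}^2$ and $v^\perp \cap \mathbb{S}^2$. Integrating the indicator of $\{(u,v):v\in \tilde B\}$ in both orders yields
$$\int_{\mathbb{S}^2} \ell\bigl(\tilde B \cap u^\perp \cap \mathbb{S}^2\bigr)\, d\sigma(u) \;=\; \int_{\tilde B} 2\pi\, d\sigma \;=\; 0,$$
so for $\sigma$-almost every $u\in \mathbb{S}^2$ the bad locus has zero one-dimensional length on the great circle $u^\perp \cap \mathbb{S}^2$. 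Since the Euclidean normalization $y\mapsto y/|y|$ identifies $\partial L$ with $u^\perp \cap \mathbb{S}^2$ (for $L = K\cap u^\perp$), the points of $\partial L$ at which $F_{K,0}$ is differentiable, which I call \emph{smooth}, are dense in $\partial L$ for such $u$. Let $\cU$ be this full-measure, hence dense, set of directions.

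Fix $u\in \cU$ and $H = u^\perp$. Pick any smooth $x\in \partial L$; its 2D sub-gradient $g_x := P_H\nabla F_{K,0}(x)$ (with $P_H$ orthogonal projection onto $H$) is the unique element of $\partial F_{L,0}(x)$, and it is nonzero because $0\in \mathrm{int}(K)\cap H$ forbids $H$ from being a supporting hyperplane to $K$ at $x$. To obtain a second smooth point with independent sub-gradient, consider
$$E := \bigl\{y\in \partial L : \partial F_{L,0}(y) \cap \spn\{g_x\} \neq \emptyset\bigr\}.$$
This set is closed by upper semi-continuity of the sub-differential, and it is a strict subset of $\partial L$, since otherwise every supporting line of $L$ would be perpendicular to $g_x$, contradicting that $L$ has non-empty interior. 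By density of smooth points, any smooth $y\in \partial L\setminus E$ has 2D sub-gradient $g_y$ nonzero and not parallel to $g_x$, so $g_x$ and $g_y$ span $H$, as required.

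I expect the main obstacle to be making the Fubini step precise: the natural measure on $M$ must disintegrate symmetrically under the two projections. A clean route is to push forward Haar measure from $SO(3)$ onto $M$ via $R\mapsto(Re_1,Re_2)$, after which the symmetry is manifest and the equality of the two iterated integrals is routine; the remainder of the argument is standard convex analysis.
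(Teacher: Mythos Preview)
Your proof is correct and follows essentially the same route as the paper's: both use that the non-differentiability set of the convex function $F_{K,0}$ is Lebesgue null, transfer this to the sphere by homogeneity, apply a Fubini-type argument to conclude that for $\sigma$-a.e.\ $u$ the smooth points have full (hence dense) measure on the great circle $u^\perp\cap\mathbb{S}^2$, and then pick a smooth $x$ and a smooth $y$ whose two-dimensional sub-gradient is independent of $g_x$. Your treatment is in fact slightly more explicit than the paper's---you justify the Fubini step via the $SO(3)$-invariant measure on the flag manifold and spell out why $g_x\neq 0$ and why $E\subsetneq\partial L$, whereas the paper simply asserts the almost-sure statement and that the set of $y$ with independent sub-gradient ``contains an open set.''
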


\begin{proof}
Denote by $D$ the set of points that $F_{K,0}$ is differentiable at.
By homogeneity of norms, if $x \in D$ then $r x \in D$ for all $r>0$. 
If $B$ is a centered Euclidean ball of radius one,
then the measure of $B \setminus D$ is zero (see e.g.~\cite{rockafellar1997convex}).
Let $\sigma$ be the uniform measure on $\mathbb{S}^2$.
By integration in polar coordinates,
$$0 = \int_{[0,1]} \int_{u \in \mathbb{S}^2} r^2 1_{ru \not \in D}
d \sigma dr = \int_{[0,1]} r^2 \int_{u \in \mathbb{S}^2}  1_{u \not \in D}
d \sigma dr =  \frac{\sigma( \mathbb{S}^2 \setminus D)}{3} ,$$
and so $\sigma(D \cap \mathbb{S}^2) = 1$.
Denote by $\mathbb{S}_u$ the unit circle in $u^\perp$,
and denote by $\sigma_u$ the uniform measure on this circle.
It follows that 
$\sigma_u ( D \cap u^\perp) = 1$ for $\sigma$-almost-surely every 
$u \in \mathbb{S}^2$.

Now, fix $u_0 \in \mathbb{S}^2$,
and let $N$ be an open neighborhood of $u_0$.
By the above, there is $u \in N$
so that $\sigma_u ( D \cap u^\perp) = 1$.
Let $H = u^\perp$, let $x \in D \cap \partial(H \cap  K)$,
and let $g_x \in \partial F_{K \cap H,0}(x)$.
Let $Y$ be the set of $y \in \partial(H \cap  K)$ so that 
there is $g_y \in \partial F_{K \cap H,0}(y)$ that is linearly independent of $g_x$.
Because $K \cap H$ is convex and bounded, 
the set $Y$ contains an open set.
So, there is $y \in D \cap Y$.
\end{proof}

\begin{proof}[Proof of Theorem~\ref{thm:3} general case]
The proof start as in the smooth case. 
Let $K \subset \R^3$ be intuitive. 
Let $\cU$ be the set given by Claim~\ref{clm:denseH}.
By Proposition~\ref{prop:smooth},
for all $\ell \in \cU$,
there is $u = u_\ell \in \mathbb{S}^2$ so that
$\partial(K \cap H) \subset \mathcal{S}(K,u)$ where $H = \ell^\perp$.
It remains to move from the dense set $\cU$ to all of 
the sphere,
because then Proposition~\ref{prop:MM} implies that $K$
is an ellipsoid. 
This move is achieved by continuity.

Let $\ell \in \mathbb{S}^2$,
and let $\ell_1,\ell_2,\ldots \in \cU$ tend to $\ell$. 
By moving to a subsequence if needed,
the vectors $u_{\ell_i}$ tend to a limit which we denote by $u_\ell$.
Fix $z \in \partial(K \cap H)$ with $H = \ell^\perp$.
There is $z_i \in \ell_i^\perp$ so that the $z_i$'s tend to $z$.
Again, moving to a sub-sequence (which could depend on $z$) if needed,
the sub-gradients $G_{z_i}$ tend to a limit $G_z$.
Because for all $i$, 
$$\forall y \in \R^n \ F_{K,0}(y) \geq F_{K,0}(z_i) + \ip{G_{z_i}}{y-z_i},$$
we see that $G_z \in \partial F_{K,0}(z)$.
Because $G_{z_i} \in (u_{\ell_i})^\perp$,
we also have that $G_z \in (u_\ell)^\perp$.
It follows that $z \in \mathcal{S}(K,u_\ell)$.

\end{proof}

\bibliographystyle{amsplain}
\bibliography{intui}   

\end{document}